\newcommand{\R}{{\mathbb R}}
\newcommand{\F}{{\mathcal F}}
\newcommand{\Q}{{\mathcal Q}}
\newcommand{\T}{{\mathcal T}}
\newcommand{\al}{\alpha}
\newcommand{\be}{\beta}
\newcounter{tmpc}
\newtheoremstyle{boldheadspec}
  {}
  {}
  {\itshape}
  {}
  {\bfseries}
  {.}
  { }
  {\thmname{#1}\thmnumber{ #2}\thmnote{ (#3)}}
\theoremstyle{boldheadspec}
\newtheorem{prop}{Proposition}
\newtheorem{theorem}{Theorem}
\newtheorem{corollary}{Corollary}
\newtheorem{lemma}{Lemma}
\newtheorem{obs}{Observation}
\theoremstyle{definition}
\newtheorem*{defn}{Definition}
\begin{document}

\title{Extremal results on  intersection graphs of boxes in $\R^d$}

\author{A.\ Mart\'{\i}nez-P\'{e}rez, L.\ Montejano and D.\ Oliveros}

\dedicatory{Dedicated to Tudor Zamfirescu.}
\maketitle
\begin {abstract}
The main purpose of this paper is to study extremal results on the intersection graphs of boxes in  $\R^d$. We calculate exactly the maximal
number of intersecting pairs in a family $\F$ of $n$ boxes in 
$\R^d$ with the property that no $k+1$ boxes in $\F$ have a point in common. This allows us to improve the known bounds for the fractional Helly theorem for boxes. We also use the Fox--Gromov--Lafforgue--Naor--Pach results to derive a  fractional Erd\H{o}s--Stone theorem  for semi-algebraic graphs in order to obtain a second proof of the fractional Helly theorem for boxes.
\end{abstract}

\section{Introduction and Results}

In \cite{B}, the authors studied the fractional behavior of the intersection structure of finite 
families of \emph{axis-parallel boxes}, or \emph{boxes} for short, in $\R^d$. 
Their aim was to prove the following statement similar to the Fractional Helly Theorem 
\cite{kathliu}: ``Let $\F$ be a family of $n$ axis-parallel boxes in $\R^d$ 
and $\al\in (1-1/d,1]$ a real number. There exists a real number $\be(\al)>0$ such that 
if there are $\al{n\choose 2}$ intersecting
pairs in $\F$, then $\F$ contains an intersecting
subfamily of size $\be n$.'' A simple example shows that 
this statement is best possible in the sense
that if $\al\leq 1-1/d$, 
there may be no point in $\R^d$ that belongs to more than $d$ elements of $\F$. 

A key idea for tackling this problem is the following notion: let $n\geq k\geq d$ and let $T(n,k,d)$ denote the maximal
number of intersecting pairs in a family $\F$ of $n$ boxes in 
$\R^d$ with the property that no $k+1$ boxes in $\F$ have a point in common.
The following bound was also obtained in \cite{B}:
\begin{equation}
T(n,k,d)<\frac{d-1}{2d}n^{2}+\frac{2k+d}{2d}n.
\end{equation}

It is not difficult to determine $T(n,k,d)$ precisely when $d=1$:
\begin{equation}
T(n,k,1)={n\choose 2}-{n-k+1\choose 2}.
\end{equation}

In fact, the graph with $n$ vertices that is the complement of  the complete graph with $n-k+1$ vertices 
is the extremal graph which is  the intersection graph of a collection of $k-1$ copies of one interval and
$n-k+1$ disjoint intervals on top, and  has 
$T(n,k,1)={n\choose 2}-{n-k+1\choose 2}$ edges.

The purpose of this paper is to determine $T(n,k,d)$ precisely, and show the following theorem:

\begin{theorem} 
For every $n\geq k>d \geq 1$,
\[T(n,k,d)=t(n-k+d,d)+T(n,k-d+1,1),\]
where $t(n,m)$ denotes the number of edges of the Tur\'an graph $\T(n,m)$.
\end{theorem}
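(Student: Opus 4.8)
The plan is to translate the geometric extremal problem into a purely graph-theoretic one and then attack it by induction, using the known one-dimensional formula as an ingredient. First I would record the Helly-type fact for boxes: since each box is a product of intervals and intervals have Helly number $2$, a subfamily of boxes has a common point if and only if it is pairwise intersecting. Hence, if $G$ is the intersection graph of $\F$, the hypothesis ``no $k+1$ boxes meet'' is exactly $\omega(G)\le k$, and $G$ ranges over all graphs of boxicity at most $d$ (intersections of at most $d$ interval graphs). Thus $T(n,k,d)$ is the maximum number of edges of an $n$-vertex graph $G$ with $\omega(G)\le k$ and boxicity at most $d$, and it suffices to bound $|E(G)|$ for such $G$.

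For the lower bound I would exhibit the extremal family directly. Take $k-d$ ``huge'' boxes equal to $[0,M]^d$; these are universal. For the remaining $m:=n-k+d$ boxes, split them into groups $V_1,\dots,V_d$ of sizes as equal as possible, and to a box of $V_i$ assign an interval that is thin in coordinate $i$ (the thin intervals inside one $V_i$ chosen pairwise disjoint) and equal to $[0,M]$ in every other coordinate. Two boxes of the same $V_i$ then separate in coordinate $i$, while two boxes from different groups always meet, so this part realizes the Tur\'an graph $\T(m,d)$. A short computation gives $\binom{k-d}{2}+(k-d)m=\binom{n}{2}-\binom{m}{2}=T(n,k-d+1,1)$ edges from the universal boxes and $t(m,d)$ edges inside the Tur\'an part, for a total of $t(n-k+d,d)+T(n,k-d+1,1)$, while a maximum clique uses all $k-d$ universal boxes and one box per group, giving $\omega=k$.

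For the upper bound I would pass to the complement $\bar G$ (the disjointness graph) and induct on $n$. Writing $m=n-k+d$ and $g(n):=\binom{m}{2}-t(m,d)$, the target $|E(G)|\le t(m,d)+\binom{n}{2}-\binom{m}{2}$ is equivalent to $|E(\bar G)|\ge g(n)$, and since $g(n)-g(n-1)=\lceil m/d\rceil-1$, the induction closes as soon as one can delete a single box that is disjoint from at least $\lceil m/d\rceil-1$ of the others: the remaining graph is again an intersection of at most $d$ interval graphs with clique number at most $k$, so the hypothesis applies, and the base case $n=k$ (where $g(k)=0$) is trivial. The whole theorem therefore reduces to the following statement, which I expect to be the main obstacle: \emph{if $\F$ has more than $k$ boxes in $\R^d$ with no $k+1$ sharing a point, then some box is disjoint from at least $\lceil (n-k+d)/d\rceil-1$ of the others.}

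This is exactly where boxicity $\le d$ must be used, and is the hard part. The bare condition $\omega(G)\le k$ only yields, via Tur\'an, a vertex of codegree $\approx n/k$ in $\bar G$, whereas we need $\approx n/d$; the gap is genuine, since the complete $k$-partite graph (whose complement is $k$ balanced cliques, hence of small maximum codegree) has boxicity at least $k>d$ and is thus excluded. To prove the lemma I would exploit the interval-order structure of each coordinate: disjointness in coordinate $i$ is comparability in the interval order $P_i$ given by ``lies entirely to the left of,'' and interval orders contain no $2+2$, so a single coordinate cannot spread its comparabilities among several disjoint cliques of $\bar G$. Quantifying this ``no $2+2$'' obstruction across the $d$ orders so that the disjoint pairs cannot be distributed too thinly is the technical heart; naive averaging, or the refinement bound $n\le k\prod_i\ell_i$ with $\ell_i$ the longest chain of $P_i$, is far too weak, and the correct argument must track how the $d$ orders interact with a maximum common antichain. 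I would also note that one may instead organise the induction on $d$, using the identity $f(n,k,d)=f(n,k-1,d-1)+\bigl(t(m,d)-t(m,d-1)\bigr)$ and slicing along a coordinate, but the same structural difficulty resurfaces there.
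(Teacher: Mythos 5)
Your lower-bound construction is correct and is essentially the paper's own: $k-d$ universal boxes joined to a geometric realization of the Tur\'an graph $\T(n-k+d,d)$ by boxes that are thin in one coordinate and full in the others, with the same edge count $t(n-k+d,d)+T(n,k-d+1,1)$ and clique number $k$. Your upper-bound framework is also arithmetically sound: with $m=n-k+d$ and $g(n)=\binom{m}{2}-t(m,d)$, the increment $g(n)-g(n-1)=\lceil m/d\rceil-1$ is right, so the single-vertex-deletion induction on $n$ does close, and the base case $n=k$ is trivial, \emph{provided} one can always find a box disjoint from at least $\lceil m/d\rceil-1$ of the others. But that lemma is exactly where you stop: you state it, call it the technical heart, and offer only speculation about interval orders and $2+2$-free posets, with no argument. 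Since this lemma is precisely the geometric content of the theorem, the proposal has a genuine gap --- everything actually proved is the routine part.

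The missing lemma is true, and the idea you dismiss --- averaging --- is the right one, only not over all $n$ boxes (there it indeed fails when $k$ grows with $n$, as you observe) but over $d$ carefully chosen boxes; this selection is the paper's key step. For $i=1,\dots,d$ pick greedily, avoiding earlier choices, the box $B_i$ minimizing the right endpoint $c_i=b_i(B_i)$ in coordinate $i$, and set $\F_0=\{B_1,\dots,B_d\}$. Then: (i) any box $B\notin\F_0$ meeting every $B_i$ satisfies $a_i(B)\le c_i\le b_i(B)$ for all $i$, hence contains the point $(c_1,\dots,c_d)$; together with an $r$-clique of $\F_0$, where $r=\omega(\langle\F_0\rangle)$, these boxes are pairwise intersecting, hence by the Helly property of boxes share a common point, so there are at most $k-r$ of them; (ii) every remaining box is disjoint from at least one $B_i$, and there are at least $n-d-(k-r)$ such boxes; (iii) inside $\F_0$ there are at least $\binom{d}{2}-t(d,r)$ non-edges, by Tur\'an's theorem. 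Summing, the codegrees of $B_1,\dots,B_d$ in the disjointness graph total at least $n-k+r-d+2\left(\binom{d}{2}-t(d,r)\right)\ge n-k$, where the last inequality is the paper's Lemma 1 ($t(d,r)-r\le t(d,d)-d$). By pigeonhole some $B_i$ has codegree at least $\lceil (n-k)/d\rceil=\lceil m/d\rceil-1$, which is your lemma. The paper packages this slightly differently --- it deletes all $d$ boxes of $\F_0$ at once and inducts in steps of $d$, bounding the edges incident to $\F_0$ by $(d-1)n+k+t(d,d)-d$ --- but the counting is identical; your single-vertex induction is a clean repackaging that simply cannot stand without this selection argument, and the poset-theoretic machinery you anticipate is not needed.
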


For every $n\geq k>d \geq 1$, we shall describe a family  $\F$ of  $n$ boxes in $\R^d$ with the property that no $k+1$ boxes in  $\F$ have a point in common but the number of intersecting pairs is $t(n-k+d,d)+T(n,k-d+1,1)$. In fact, we shall precisely describe an intersection graph of this family which is an extremal graph of this problem.

\begin{corollary} 
For every $n\geq k>d \geq 1$,
\[
T(n,k,d) \leq \frac{d-1}{2d}n^{2}+(\frac{k}{d}-1)n +\frac{k}{2}(1-\frac{k}{d})<\frac{d-1}{2d}n^{2}+\frac{2k+d}{2d}n,
\]
and
\[
\left| T(n,k,d) - (\frac{d-1}{2d})n^{2}+(\frac{k}{d}-1)n) \right| 
\]
\noindent as function of $n$ is bounded by a constant that only depends on $k$ and $d$.

\end{corollary}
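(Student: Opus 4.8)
The plan is to treat the Corollary as a direct computation from the Theorem, since it merely repackages the exact identity $T(n,k,d)=t(n-k+d,d)+T(n,k-d+1,1)$ into a single quadratic in $n$ plus an error term. First I would substitute the one-dimensional value from formula (2): with $k'=k-d+1$ one has $n-k'+1=n-k+d$, so $T(n,k-d+1,1)=\binom{n}{2}-\binom{n-k+d}{2}$. Writing $N:=n-k+d$, a short computation gives $\binom{n}{2}-\binom{N}{2}=\tfrac12(n-N)(n+N-1)=(k-d)n-\tfrac{(k-d)(k-d+1)}{2}$, which is already an exact quadratic in $n$ with no error term.

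Next I would expand the Tur\'an term. Writing $N=qd+r$ with $0\le r<d$, the standard edge count of $\mathcal{T}(N,d)$ gives $t(N,d)=\tfrac12\bigl(N^{2}-\sum_i n_i^{2}\bigr)=\bigl(1-\tfrac1d\bigr)\tfrac{N^{2}}{2}-\tfrac{r(d-r)}{2d}$, which is the only place an error term appears. Substituting $N=n-k+d$ and expanding $(n-k+d)^{2}$, the quadratic contribution is $\tfrac{d-1}{2d}n^{2}$. Adding the linear contributions of both pieces, the coefficient of $n$ is $-\tfrac{d-1}{d}(k-d)+(k-d)=\tfrac{k-d}{d}=\tfrac kd-1$, exactly as claimed, while the constant part collapses to $C=-\tfrac{k(k-d)}{2d}-\tfrac{r(d-r)}{2d}$.

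With this in hand, every assertion is immediate. Since $0\le r<d$ forces $r(d-r)\ge 0$, I get $C\le -\tfrac{k(k-d)}{2d}=\tfrac k2\bigl(1-\tfrac kd\bigr)$, which is the first inequality. For the strict comparison with the bound of \cite{B}, the two right-hand sides share the $n^{2}$-coefficient, and the gap in the coefficient of $n$ is $\tfrac{2k+d}{2d}-\bigl(\tfrac kd-1\bigr)=\tfrac32$; since $k>d$ makes $\tfrac k2\bigl(1-\tfrac kd\bigr)<0<\tfrac32 n$, the strict inequality holds for all admissible $n$. Finally, for the boundedness statement the quantity in question is exactly $|C|$, and as $r$ ranges only over $\{0,\dots,d-1\}$ with $r(d-r)\le d^{2}/4$, we obtain $|C|\le \tfrac{k(k-d)}{2d}+\tfrac d8$, a constant depending only on $k$ and $d$.

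There is no serious obstacle here: all the mathematical content lives in the Theorem, and the Corollary is bookkeeping. The one point requiring care — and the sole source of an inequality rather than an equality — is the remainder term $r(d-r)/(2d)$ arising from the unequal part sizes of the Tur\'an graph when $d\nmid(n-k+d)$; tracking it correctly is what both produces the clean upper bound (attained when $r=0$) and controls the oscillating constant in the final assertion.
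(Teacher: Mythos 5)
Your computation is correct and is exactly the route the paper intends: the Corollary is stated as a direct consequence of the Main Theorem, and you carry out that substitution correctly, with $T(n,k-d+1,1)=\binom{n}{2}-\binom{n-k+d}{2}$ and $t(n-k+d,d)=\bigl(1-\tfrac1d\bigr)\tfrac{(n-k+d)^2}{2}-\tfrac{r(d-r)}{2d}$ combining to give $T(n,k,d)=\tfrac{d-1}{2d}n^2+\bigl(\tfrac kd-1\bigr)n+\tfrac k2\bigl(1-\tfrac kd\bigr)-\tfrac{r(d-r)}{2d}$, from which all three assertions follow. The paper itself leaves this bookkeeping implicit (no proof of the Corollary is printed), so your write-up is in fact more complete than the source while following the same approach.
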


This corollary allows us to obtain the best Helly Fractional Theorem for boxes, although if no importance is given to the constants, 
a very interesting approach, using the work of Fox--Gromov--Lafforgue--Naor--Pach \cite{FGL} for semi-algebraic 
graphs \cite{FP}, is given in Section \ref{sec:semi-algebraic}.

\section{Technical  Propositions}

In this section we will give some definitions and basic technical propositions.

For two given integers $n\geq m\geq 1$, the Tur\'an graph $\T(n,m)$ is a complete
$m$-partite graph on $n$ vertices in which the cardinalities of the 
$m$ vertex classes are as close to each other as possible.  Let 
$t(n,m)$ denote the number of edges of the Tur\'an graph $\T(n,m)$.
It is known that $t(n,m)\leq (1-\frac{1}{m})\frac{n^2}{2}$, and
equality holds if $m$ divides $n$. In fact,
\begin{equation}\label{turangraph}
\lim_{n\to\infty}\frac{t(n,m)}{\frac{n^2}{2}}=1-\frac{1}{m}. 
\end{equation}
For completeness define $t(n,1)=0$.
For more information on the properties of Tur\'an graphs see, for example,
the book of Diestel \cite{D}.

\begin{lemma}\label{quita-r}
 For $1 \leq r \leq d$, \[ t(d,r)-r \leq t(d,d)-d. \]
\end{lemma}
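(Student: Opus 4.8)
The plan is to reformulate the inequality in terms of the edges that $\T(d,r)$ omits from the complete graph $K_d=\T(d,d)$, rather than working directly with $t(d,r)$. Since $\T(d,r)$ is obtained from $K_d$ by deleting exactly the pairs lying inside a common vertex class, if $n_1,\dots,n_r$ denote the sizes of the $r$ classes of $\T(d,r)$, then $t(d,d)-t(d,r)=\sum_{i=1}^r\binom{n_i}{2}$. Rewriting the claim $t(d,r)-r\le t(d,d)-d$ in the equivalent form $t(d,d)-t(d,r)\ge d-r$, it therefore suffices to prove $\sum_{i=1}^r\binom{n_i}{2}\ge d-r$.

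Next I would record the elementary bookkeeping identity $d-r=\sum_{i=1}^r(n_i-1)$, valid because the $r$ classes partition the $d$ vertices into nonempty blocks. Comparing the two sums term by term, the whole lemma reduces to the single-variable inequality $\binom{m}{2}\ge m-1$ for every integer $m\ge 1$, applied with $m=n_i$. This in turn follows from the exact identity $\binom{m}{2}-(m-1)=\binom{m-1}{2}$, whose right-hand side is nonnegative for all $m\ge 1$ (and zero precisely when $m\in\{1,2\}$).

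Summing $\binom{n_i}{2}-(n_i-1)=\binom{n_i-1}{2}\ge 0$ over $i=1,\dots,r$ then yields $\sum_i\binom{n_i}{2}-(d-r)=\sum_i\binom{n_i-1}{2}\ge 0$, which is exactly the desired inequality, completing the proof.

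I expect no serious obstacle here: once the problem is phrased through the complementary (within-class) edge count, the balanced structure of the Turán graph becomes irrelevant and the statement holds for \emph{any} partition of $d$ into $r$ parts. The only point requiring a little care is the reformulation step itself, namely recognizing that subtracting $t(d,d)$ converts the awkward floor/ceiling expression for $t(d,r)$ into the clean sum $\sum_i\binom{n_i}{2}$, after which everything is a term-by-term comparison. Equality holds exactly when every class has size at most $2$, which is consistent with the tight case $r=d$, where $q=1$ and each class is a singleton.
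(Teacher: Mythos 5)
Your proof is correct, but it takes a genuinely different route from the paper's. The paper splits into two cases: for $d/2 \leq r \leq d$ it observes that the complement of $\T(d,r)$ in $K_d$ is a matching of $d-r$ edges, so $t(d,d)-t(d,r)=d-r$ exactly (equality in the lemma); for $1 \leq r \leq d/2$ it instead invokes the standard bound $t(d,r)\leq (1-\frac{1}{r})\frac{d^2}{2}$ and verifies the resulting algebraic inequality $(1-\frac{1}{r})\frac{d^2}{2}-r \leq (1-\frac{1}{d})\frac{d^2}{2}-d$. You avoid the case split entirely: writing $t(d,d)-t(d,r)=\sum_{i=1}^r\binom{n_i}{2}$ for the class sizes $n_i$, and $d-r=\sum_{i=1}^r(n_i-1)$, the lemma follows from the term-by-term identity $\binom{n_i}{2}-(n_i-1)=\binom{n_i-1}{2}\geq 0$. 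This is cleaner and strictly more general — it holds for \emph{any} complete multipartite graph on $d$ vertices with $r$ classes, balanced or not, and it does not need the Tur\'an edge-count estimate as an input. It also yields the precise equality condition (all classes of size at most $2$, equivalently $r\geq d/2$), which recovers the paper's first case as the tight regime; the paper's approach, by contrast, gets that exact value directly but pays for it with the second, less transparent estimate. The only cosmetic slip is the unexplained symbol $q$ in your final remark, which you never define; it plays no role in the argument.
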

\begin{proof} If $d/2 \leq r \leq d$, then the Tur\'an graph $\T(d,r)$ is the complement of the graph with $d$ vertices and  $d-r$ pairwise non-intersecting edges. So, since $t(d,d)={d \choose 2}$, we have that $t(d,d)-t(d,r)=d-r$.

If $1 \leq r \leq d/2$, then 
\[t(d,r)-r  \leq (1-\frac{1}{r})\frac{d^2}{2}-r \leq (1-\frac{1}{d})\frac{d^2}{2}-d. \qedhere \]
\end{proof}

\begin{lemma}\label{restat} For $1 \leq d \leq n$, 
\[ t(n+d,d) - t(n,d) = (d-1)n +  {d \choose 2}. \]
\end{lemma}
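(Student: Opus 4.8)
The plan is to reduce the identity to a short direct computation based on the closed form for the edge count of a complete multipartite graph, after first pinning down how the vertex classes change. Write $n = qd + s$ with $0 \leq s \leq d-1$, so that $\T(n,d)$ has $s$ vertex classes of size $q+1$ and $d-s$ classes of size $q$. The first key step is the structural observation that, since $n+d = (q+1)d + s$, the graph $\T(n+d,d)$ has $s$ classes of size $q+2$ and $d-s$ classes of size $q+1$. Comparing the two partitions class by class, one sees that $\T(n+d,d)$ is obtained from $\T(n,d)$ by adding exactly one new vertex to each of the $d$ classes, independently of the residue $s$.

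Next I would record the edge count. If a complete $d$-partite graph has classes of sizes $n_1,\dots,n_d$ with $\sum_i n_i = N$, then its number of edges is $\tfrac12\bigl(N^2 - \sum_i n_i^2\bigr)$, since it is the complement, inside the complete graph $K_N$, of the disjoint union of cliques on the classes. Applying this to $\T(n,d)$ with classes $n_1,\dots,n_d$ and to $\T(n+d,d)$ with classes $n_1+1,\dots,n_d+1$, the difference telescopes:
\[
t(n+d,d) - t(n,d) = \frac{1}{2}\left[(n+d)^2 - n^2 - \sum_{i=1}^{d}\bigl((n_i+1)^2 - n_i^2\bigr)\right].
\]
Expanding the square gives $(n+d)^2-n^2 = 2nd+d^2$, while $\sum_i\bigl((n_i+1)^2-n_i^2\bigr) = \sum_i(2n_i+1) = 2n+d$ by the constraint $\sum_i n_i = n$. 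Hence the right-hand side equals $\tfrac12\bigl[2n(d-1)+d(d-1)\bigr] = (d-1)n + {d \choose 2}$, which is exactly the asserted value.

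The computation is routine once the structural step is in hand, so the only point requiring real care is verifying that every class grows by precisely one vertex when passing from $\T(n,d)$ to $\T(n+d,d)$; this is where the ``as balanced as possible'' definition of the Tur\'an graph does the work, and it is what makes the increment a clean closed form rather than something depending on $s$. An alternative, more combinatorial route would be to insert the $d$ new vertices one at a time, but the intermediate graphs $\T(n+j,d)$ for $0<j<d$ are not obtained from $\T(n,d)$ by a uniform per-class increment, so the all-at-once argument above is the cleanest path and avoids case analysis on the residue.
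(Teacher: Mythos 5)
Your proof is correct and rests on the same key observation as the paper's: that $\T(n+d,d)$ is obtained from $\T(n,d)$ by adding exactly one vertex to each of the $d$ classes. The paper states only this structural fact and leaves the edge count implicit, whereas you verify the class sizes via the residue decomposition and carry out the arithmetic explicitly; this is just a more detailed write-up of the same argument.
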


\begin{proof} Simply note that the complete $d$-partite graph  $\T(n+d,d)$ is obtained from the complete $d$-partite graph $\T(n,d)$ by adding one vertex to every vertex class.
\end{proof}

Furthermore,
 
\begin {obs} \label{restaT}For $n\geq k$ and $d \geq 1$,
\[ T(n+d,k,1) - T(n,k,1) = d(k-1).\]
\end{obs}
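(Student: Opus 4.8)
The plan is to read off the answer from the closed-form expression for $T(n,k,1)$ already established in the excerpt, namely $T(n,k,1)={n \choose 2}-{n-k+1 \choose 2}$, and to simplify the resulting difference of binomial coefficients. Substituting $n+d$ for $n$ and subtracting, the quantity to evaluate is
\[
T(n+d,k,1)-T(n,k,1)=\left[{n+d \choose 2}-{n \choose 2}\right]-\left[{n+d-k+1 \choose 2}-{n-k+1 \choose 2}\right].
\]

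The one ingredient I would isolate is the elementary identity ${m+d \choose 2}-{m \choose 2}=md+{d \choose 2}$, which holds for every integer $m$. Applying it with $m=n$ in the first bracket and with $m=n-k+1$ in the second, the two copies of ${d \choose 2}$ cancel, and what remains is $nd-(n-k+1)d=(k-1)d$, which is exactly the claimed value $d(k-1)$. There is no real obstacle here: the statement is a routine algebraic simplification, and the only point requiring care is to keep track of which argument plays the role of $m$ in each binomial difference, so that the two ${d \choose 2}$ terms cancel cleanly.

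As an independent check I would also record the combinatorial meaning, since it explains why the increment is so clean. The extremal configuration realizing $T(n,k,1)$ consists of $k-1$ mutually intersecting intervals, each meeting everything, together with $n-k+1$ pairwise disjoint intervals laid on top, each of which meets all $k-1$ of the former. Passing from $n$ to $n+d$ intervals at the same value of $k$ simply introduces $d$ additional pairwise disjoint intervals; each new interval creates exactly $k-1$ new intersecting pairs, one with each of the $k-1$ universal intervals, and none among the disjoint family, for a net increase of $d(k-1)$, in agreement with the computation.
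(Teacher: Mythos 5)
Your proof is correct: the paper states this as an Observation without proof, implicitly treating it as an immediate consequence of the closed formula $T(n,k,1)={n\choose 2}-{n-k+1\choose 2}$ from equation (2), which is exactly the computation you carry out. Both your algebraic simplification and your combinatorial sanity check are valid, so this matches the paper's intended (unwritten) argument.
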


To simplify the notation let us define, for $n\geq k > d \geq 1$,
\begin{equation}
\Psi (n,k,d)=t(n-k+d,d)+T(n,k-d+1,1).
\end{equation}

\begin{obs}\label{restaphi} For every $n\geq k > d \geq 1$, 
\[ \Psi (n+d,k,d) - \Psi (n,k,d) = (d-1)n +k + {d \choose 2} -d.\]
\end{obs}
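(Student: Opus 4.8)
The plan is to unfold the definition $\Psi(n,k,d) = t(n-k+d,d) + T(n,k-d+1,1)$ and observe that the difference splits additively into a Tur\'an-graph part and a one-dimensional part, each of which is already controlled by one of the preceding results. Writing both terms from the definition and regrouping, I would record the identity
\[
\Psi(n+d,k,d) - \Psi(n,k,d) = \bigl[\, t((n-k+d)+d,d) - t(n-k+d,d)\,\bigr] + \bigl[\, T(n+d,k-d+1,1) - T(n,k-d+1,1)\,\bigr],
\]
noting that the first argument of $t$ in $\Psi(n+d,k,d)$ is $(n+d)-k+d = (n-k+d)+d$, which is exactly $n-k+d$ incremented by $d$, and likewise the first argument of $T$ increases from $n$ to $n+d$.

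For the first bracket I would apply Lemma \ref{restat} with $n$ replaced by $n-k+d$; its hypothesis $1\le d\le n-k+d$ reduces to $k\le n$, which holds, yielding $(d-1)(n-k+d) + {d \choose 2}$. For the second bracket I would apply Observation \ref{restaT} with $k$ replaced by $k-d+1$; here the required inequality $n\ge k-d+1$ follows from $n\ge k\ge k-d+1$, and the formula gives $d\bigl((k-d+1)-1\bigr) = d(k-d)$.

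Finally I would add the two contributions, so the entire content of the proof is the routine simplification of $(d-1)(n-k+d) + {d \choose 2} + d(k-d)$. The $d^2$ terms cancel between $(d-1)d$ and $-d^2$, and the $k$-dependence collapses since $-(d-1)k + dk = k$, leaving $(d-1)n + k + {d \choose 2} - d$, exactly as stated. I expect no genuine obstacle in this argument; the only point requiring care is checking that the two index shifts $n\mapsto n-k+d$ and $k\mapsto k-d+1$ stay within the ranges demanded by Lemma \ref{restat} and Observation \ref{restaT}, both of which I verified above using only the standing hypothesis $n\ge k>d\ge 1$.
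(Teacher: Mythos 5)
Your proposal is correct and follows exactly the paper's own argument: split the difference into the $t$-part and the $T$-part, apply Lemma~\ref{restat} (with $n$ shifted to $n-k+d$) and Observation~\ref{restaT} (with $k$ shifted to $k-d+1$), and simplify $(d-1)(n-k+d)+{d \choose 2}+d(k-d)$ to the stated expression. The only difference is that you explicitly verify the hypotheses of the two cited results, which the paper leaves implicit; this is a minor but welcome addition.
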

\begin{proof} $\Psi (n+d,k,d) - \Psi (n,k,d)= t((n+d-k)+d,d)-t(n-k+d,d))+ T(n+d,k-d+1,1)-T(n,k-d+1,1)$. So, by Lemma \ref{restat} and 
Observation~\ref{restaT}, $\Psi (n+d,k,d) - \Psi (n,k,d)=(d-1)(n-k+d)+ {d \choose 2} + d(k-d) =  (d-1)n +k + {d \choose 2} -d$. 
\end{proof}

\begin{prop}\label{psiigual} For every $ k > d \geq 1$ and $ 0\leq s < d$,
\[\Psi (k+s,k,d) = t(k+s,k)={k+s \choose 2}-s.\]
\end{prop}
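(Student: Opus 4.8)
The plan is to evaluate $\Psi(k+s,k,d)$ directly from its definition and to verify separately that the resulting expression equals $t(k+s,k)$. Substituting $n=k+s$ into $\Psi(n,k,d)=t(n-k+d,d)+T(n,k-d+1,1)$ turns the left-hand side into $t(s+d,d)+T(k+s,k-d+1,1)$, so everything reduces to computing these two terms and adding them.

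For the Tur\'an term I would record the elementary fact, already implicit in the first case of Lemma~\ref{quita-r}, that for $0\le s<m$ the graph $\T(m+s,m)$ splits $m+s$ vertices into $s$ classes of size $2$ and $m-s$ classes of size $1$; its complement is then a matching of $s$ edges together with isolated vertices, so $t(m+s,m)={m+s\choose 2}-s$. Applying this with $m=d$ (legitimate since $0\le s<d$) gives $t(s+d,d)={s+d\choose 2}-s$.

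For the second term I would insert the closed form $T(n,k,1)={n\choose 2}-{n-k+1\choose 2}$ with $n=k+s$ and with $k-d+1$ in the role of $k$. The index of the subtracted binomial becomes $(k+s)-(k-d+1)+1=s+d$, so $T(k+s,k-d+1,1)={k+s\choose 2}-{s+d\choose 2}$. Adding the two terms, the ${s+d\choose 2}$ contributions cancel and leave $\Psi(k+s,k,d)={k+s\choose 2}-s$.

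It remains only to identify this value with $t(k+s,k)$, which follows from the same Tur\'an identity applied with $m=k$ and vertex count $k+s$; this is valid because $0\le s<d<k$ places $k+s$ in the range $[k,2k)$, giving $t(k+s,k)={k+s\choose 2}-s$. The computation is therefore wholly routine; the only point requiring attention is the edge count of $\T(m+s,m)$, where one must confirm that $m+s$ lies in $[m,2m)$ so that the balanced partition uses only singletons and pairs and the number of omitted within-class edges is exactly $s$.
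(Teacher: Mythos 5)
Your proof is correct and follows essentially the same route as the paper: substitute into the definition of $\Psi$, compute $t(s+d,d)={s+d\choose 2}-s$ via the complement-is-a-matching observation, expand $T(k+s,k-d+1,1)$ using the closed form for $d=1$, and let the ${s+d\choose 2}$ terms cancel. Your only addition is spelling out why $t(k+s,k)={k+s\choose 2}-s$ (checking $k+s<2k$), a step the paper asserts without comment.
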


\begin{proof} By definition, $\Psi (k+s,k,d)=t(s+d,d)+T(k+s,k-d+1,1)= t(s+d,d)+{k+s\choose 2}-{s+d\choose 2}$, but $t(s+d,d)$ is the number of edges of the Tur\'an  graph $\T(s+d,d)$ which is the complement of the graph with $s+d$ vertices and  $s$ pairwise non-intersecting edges (since $0 \leq s \leq d$). So, $t(s+d,d)= {s+d\choose 2} -s$.  Thus  $\Psi (k+s,k,d) ={k+s \choose 2}-s= t(k+s,k)$. \end{proof}

\section{The Extremal Result}

In this section we will prove our main theorem. We start by proving the following proposition.

\begin{prop}\label{menorque} For $n\geq k > d \geq 1$, 
\[T(n,k,d) \leq \Psi (n,k,d).\]
\end{prop}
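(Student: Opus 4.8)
The plan is to induct on $n$ in steps of $d$, using Observation~\ref{restaphi} for the increment and Proposition~\ref{psiigual} for the base values. The first step is to pass to graph language: axis-parallel boxes have Helly number $2$ (for each coordinate the projected intervals satisfy one-dimensional Helly, and a common point is recovered coordinatewise), so a subfamily of boxes has a common point if and only if it is pairwise intersecting. Hence ``no $k+1$ boxes of $\F$ share a point'' is equivalent to ``the intersection graph $G$ of $\F$ is $K_{k+1}$-free'', and $T(n,k,d)$ is the largest number of edges of a $K_{k+1}$-free box intersection graph on $n$ vertices. For the base cases $n=k+s$ with $0\le s<d$, Proposition~\ref{psiigual} gives $\Psi(k+s,k,d)={k+s\choose 2}-s$, so it suffices to show $G$ has at least $s$ non-edges: if it had at most $s-1$, choosing one endpoint of each non-edge gives at most $s-1$ vertices meeting every non-edge, and deleting them leaves at least $k+1$ pairwise adjacent vertices, a forbidden $K_{k+1}$.

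For the inductive step I would pass from $n+d$ to $n$ by deleting $d$ chosen boxes. For each coordinate $i$ let $a_i$ be the largest left endpoint among the coordinate-$i$ projections, attained by a box $B_i^\ast$; let $R$ be a set of exactly $d$ boxes containing all the $B_i^\ast$ (padding with arbitrary boxes should the $B_i^\ast$ not be distinct), and put $a=(a_1,\dots,a_d)$. The remaining $n$ boxes still form a $K_{k+1}$-free box family, so by induction they span at most $\Psi(n,k,d)$ edges; it then suffices to bound the edges meeting $R$ by $\Psi(n+d,k,d)-\Psi(n,k,d)=(d-1)n+k+{d\choose 2}-d$, the value supplied by Observation~\ref{restaphi}. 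The key geometric observation is that every box $C$ adjacent to all of $R$ must contain $a$: adjacency to $B_i^\ast$ forces the coordinate-$i$ projection of $C$, whose left endpoint is at most $a_i$, to reach up to $a_i$, so $a_i$ lies in that projection for every $i$.

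Consequently all common neighbours of $R$ pairwise intersect at $a$ and form a clique; adjoining to them any clique $R'\subseteq R$ (each of whose vertices is adjacent to all these common neighbours) yields a larger clique, so if $x$ is the number of common neighbours and $\omega$ the clique number of the graph induced on $R$, then $x+\omega\le k$. Counting the edges from $R$ to the other $n$ boxes, each such box is adjacent to at most $d$ members of $R$, and to all $d$ only if it is one of the $x$ common neighbours, giving at most $(d-1)n+x\le (d-1)n+k-\omega$ such edges; together with the at most $e(R)$ internal edges of $R$, the total incident to $R$ is at most $e(R)+(d-1)n+k-\omega$. Since $R$ induces a $K_{\omega+1}$-free graph on $d$ vertices, $e(R)\le t(d,\omega)$, and Lemma~\ref{quita-r} gives $t(d,\omega)-\omega\le {d\choose 2}-d$, which is exactly the inequality that brings the estimate down to $(d-1)n+k+{d\choose 2}-d$.

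The hard part is arranging the clique bound $x+\omega\le k$ so that the deficiency $d-\omega$ of internal edges of $R$ is recovered precisely: deleting the $d$ extremal boxes creates non-edges inside $R$ exactly when some $B_j^\ast$ fails to contain $a$, and the content of Lemma~\ref{quita-r} is that this loss in $e(R)$ always offsets the gain $k-\omega$ one would otherwise overspend in the $R$-to-rest count; getting $\omega$ (rather than the naive count of how many $B_j^\ast$ contain $a$) into the clique bound is what makes the arithmetic close. A secondary technical point is the degenerate case where fewer than $d$ boxes attain the coordinatewise maxima, handled by padding $R$, which can only delete common neighbours and so never weakens the bound.
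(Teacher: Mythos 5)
Your proof is correct and is essentially the paper's own argument: the paper likewise inducts in steps of $d$, selects one coordinatewise-extremal box per coordinate (minimal right endpoints rather than your maximal left endpoints), observes that every common neighbour of these $d$ boxes contains the extremal corner point and hence these neighbours form a clique, derives the bound (size of that clique)$\,+\,\omega\le k$ from the Helly property of boxes, and closes the count with Tur\'an's theorem, Lemma~\ref{quita-r} and Observation~\ref{restaphi}. The only deviations are cosmetic: the direction of extremality, your padding device in place of the paper's iterative selection of distinct extremal boxes, and your explicit non-edge count in the base case where the paper simply invokes $T(n,k,d)\le t(n,k)$ together with Proposition~\ref{psiigual}.
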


\begin{proof} 
The proof is by induction on $n$. If $n=k+s$ with $0\leq s<d$, then it is clear that  $T(n,k,d) \leq t(n,k)$. Then by Proposition
\ref{psiigual}, $t(n,k)=\Psi (n,k,d)$ and we have that our proposition is true when $k\leq n < k+d$.

 Suppose the proposition is true for $n\geq k$. We shall prove it for $n+d$.  
Let $\F$ be a family of  $n+d$ boxes in 
$\R^d$ with the property that no $k+1$ boxes in $\F$ have a point in common, $n\geq k$ and $d \geq 1$. 
Let $G_{\F}$ be the intersection graph of $\F$. 

We shall prove that $\left| E(G_{\F})\right| \leq \Psi (n+d,k,d)$.\\

Let $B\in \F$. Then $B$ is of the form 
$B=((a_{1}(B),b_{1}(B))\times \dots \times (a_{d}(B),b_{d}(B)))$. We may assume
by standard arguments that all numbers $(a_{i}(B),b_{i}(B))$ ($B\in \F$)
are distinct. 

Next we will define $d$ distinct boxes $B_1,\ldots,B_d \in \F$ in the following way. Set 
\[c_1=\min \{b_1(B)\colon B \in \F\}\]
and define $B_1$ via $c_1=b_1(B_1)$. The box 
$B_1$ is uniquely determined, as all $b_1(B)$ are distinct numbers. Assume now
that $i<d$ and that the numbers $c_1,\ldots,c_{i-1}$, and boxes 
$B_1,\ldots,B_{i-1}$ have been defined. Set 
\[c_i=\min \{b_i(B)\colon B \in \F
\setminus\{B_1,\ldots,B_{i-1}\}\}\]
and define $B_i$ via $c_i=b_i(B_i)$ which, again,
is unique.

We partition $\F$ into three parts. First, let $\F_0 =\{ B_1, \dots, B_d\}$; 
second, let  $\F_1$ be the set of all boxes of $\F\setminus \F_0$ 
that intersect every $B_i$. Third, let $\F_2=\F\setminus(\F_0\cup \F_1)$.

First note that the intersection graph $\langle \F_1 \rangle$ (the generated subgraph of $\F_1$) is a complete 
subgraph of $G_{\F}$ because every box of  $\F_1$ contains the point $(c_1,\dots ,c_d)\in \R^d$. 
 
Let $S=\{e\in E(G_{\F})\mid e=\{ x,y\},  \: x\in \F_0\}$. 
 We shall prove that $\left| S\right| \leq (d-1)n +k + {d \choose 2} -d$. For this purpose, observe that 
$S= E(\langle  \F_0 \rangle)\cup E( \F_0,  \F_1) \cup E( \F_0,  \F_2)$.  

\begin{enumerate}[1)]
\item $\left| E( \F_0,  \F_1) \right| \leq d\left| V(\langle F_1 \rangle)\right| $, because $\left| V(\langle F_0 \rangle)\right|=d$.
\item $\left| E( \F_0,  \F_2) \right| \leq (d-1)\left| V(\langle F_2 \rangle)\right| $, because a point $v\in\F_2$ can not be adjacent to every point of  $ \F_0$.
\setcounter{tmpc}{\theenumi}
\end{enumerate}
 Finally, let $r=\omega (\langle F_0\rangle)$, the clique number of $\langle F_0 \rangle$. Then
\begin{enumerate}[1)]
\setcounter{enumi}{\thetmpc}
\item $\left| E(\langle F_0 \rangle)\right|\leq t(d,r)$, by the Turan Theorem.
\end{enumerate}

Therefore 
\[\left| S\right| \leq t(d,r) + d\left| V(\langle F_1 \rangle)\right| + (d-1)\left| V(\langle F_2 \rangle)\right| . \]

Since $\left| V(\langle F_2 \rangle)\right|=n-\left| V(\langle F_1 \rangle)\right|$, we have that 
\[\left| S\right| \leq (d-1)n + (\left| V(\langle F_1 \rangle)\right|+r) + t(d,r)-r.\]

Remember that $\langle F_1 \rangle$ is a complete subgraph. Hence, since $r=\omega (\langle F_0\rangle)$ and the fact that  that no $k+1$ boxes in $\F$ have a point in common, we have that $\left| V(\langle F_1 \rangle)\right|+r \leq k$, and hence by Lemma 1 that:
\[\left| S\right| \leq (d-1)n + k + t(d,d)-d=\Psi (n+d,k,d)-\Psi(n,k,d).\]

The family  $\F\setminus \F_0$ has $n$ boxes, and no $k+1$ of them have a point in common; 
hence by induction $\left| E( \langle \F\setminus \F_0\rangle)\right|\leq \Psi (n,k,d)$. Then 
$\left| E(G_{\F})\right|\leq  \quad  \mid S \mid + \left| E(\langle \F\setminus \F_0\rangle)\right| \leq  \Psi (n+d,k,d)$ as we wish. 
\end{proof}

\medskip

\begin{prop}\label{mayorque}
For $n\geq k > d \geq 1$, 
\[\Psi(n,k,d)\leq T(n,k,d).\]
\end{prop}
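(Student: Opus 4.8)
The plan is to prove this direction by an explicit construction: since $T(n,k,d)$ is defined as a \emph{maximum}, it suffices to exhibit a single family $\F$ of $n$ boxes in $\R^d$ in which no $k+1$ members share a common point and whose intersection graph has exactly $\Psi(n,k,d)$ edges. First I would rewrite $\Psi$ in a form that reveals the target graph. Writing $m=n-k+d$ and using the formula $T(n,k-d+1,1)=\binom{n}{2}-\binom{m}{2}$ together with the identity $\binom{(k-d)+m}{2}=\binom{k-d}{2}+\binom{m}{2}+(k-d)m$ (note $(k-d)+m=n$), one obtains
\[ \Psi(n,k,d)=\binom{k-d}{2}+(k-d)m+t(m,d). \]
This decomposition reads as the edge count of a graph assembled from a clique $U$ of size $k-d$, completely joined to a second vertex set $W$ of size $m$, where $W$ itself induces the Tur\'an graph $\T(m,d)$.

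Next I would realize this graph by boxes. For $W$ I use a \emph{coordinate-slot} construction: partition the $m$ boxes into $d$ classes $V_1,\dots,V_d$ of sizes as equal as possible, and to the $s$-th box of class $V_i$ assign in coordinate $i$ a short interval $[s,s+\tfrac12]$ and in every other coordinate the long interval $[0,M]$ for a large $M$. Two boxes in the same class $V_i$ are then disjoint (their $i$-th coordinate intervals are disjoint), while two boxes in distinct classes $V_i,V_j$ meet (each is long precisely in the coordinate where the other is short). Hence $\langle W\rangle$ is exactly $\T(m,d)$, contributing $t(m,d)$ edges. For $U$ I take $k-d$ copies of the box $[0,M]^d$; these pairwise intersect and contain every box of $W$, furnishing the $\binom{k-d}{2}$ edges inside $U$ and the $(k-d)m$ edges between $U$ and $W$.

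Then I would verify the Helly-type constraint. At any point of $\R^d$ the $i$-th coordinate lies in at most one of the disjoint slots of class $V_i$, so at most one box per class, hence at most $d$ boxes of $W$ in total, contains that point; together with the $k-d$ universal boxes of $U$, no point lies in more than $(k-d)+d=k$ boxes, so no $k+1$ boxes of $\F$ have a common point. Since $\F$ has $(k-d)+m=n$ boxes and its intersection graph has $\binom{k-d}{2}+(k-d)m+t(m,d)=\Psi(n,k,d)$ edges, the maximality in the definition of $T(n,k,d)$ yields $\Psi(n,k,d)\le T(n,k,d)$.

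I expect the only delicate points to be bookkeeping rather than genuine obstacles: checking that the sizes $k-d\ge 1$ and $m\ge d$ are legitimate (both follow from $n\ge k>d$), confirming that the slot construction produces precisely the \emph{balanced} $d$-partite Tur\'an graph rather than merely some complete $d$-partite graph, and justifying the use of repeated (or, after an arbitrarily small perturbation, distinct) boxes. As a sanity check, for $d=1$ this degenerates to $k-1$ copies of a long interval together with $n-k+1$ pairwise disjoint short intervals, which is exactly the extremal family described in the introduction.
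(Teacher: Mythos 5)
Your proposal is correct and follows essentially the same route as the paper: the paper also exhibits an extremal family consisting of $k-d$ copies of the full cube together with $d$ balanced classes of parallel slabs (one class per coordinate direction) realizing $\T(n-k+d,d)$, then invokes the maximality in the definition of $T(n,k,d)$. The only cosmetic differences are that the paper uses degenerate $(d-1)$-dimensional slices $\Q^d_{i,t}$ where you use thin full-dimensional slots, and that you make explicit the binomial identity $T(n,k-d+1,1)=\binom{k-d}{2}+(k-d)(n-k+d)$ that the paper leaves implicit.
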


\begin{proof} 
Let $\Q^d=[-1,1]^d$ be the standard $d$-dimensional cube in $\R^d$. For $1\leq i\leq d$ and $-1\leq t\leq 1$ let
$\Q_{i,t}^d\subset \Q^d$ be a $(d-1)$-dimensional box defined as follows: 
\[\Q_{i,t}^d=\{ x\in \Q^d\ | \text{ if } \ x=(x_1,x_2,\dots ,x_d), x_i=t \}.\]

Observe that $\Q_{i,t_k}^d \cap \Q_{i,t_l}^d = \emptyset$ if $t_k\not= t_l$, both parallel to each other and perpendicular to the $i$-axis. 

Next, consider integer numbers $q_1,q_2,\dots q_d$ such that $q_1+q_2+\dots +q_d=n-k+d$ 
and such that $|q_i-q_j|\leq 1$ for every $i,j\in \{1,\dots ,d\} $.

We define $\F_{q_i}:=\Q_{i,t_{i_1}}^d\cup \Q_{i,t_{i_2}}^d \cup \dots \cup \Q_{i,t_{i_{q_i}}}^d$ 
where $t_{i_k}\not= t_{i_j}$ for every $k,j \in \{1,\dots q_i\}$ and define $\F_0$ as the union of $k-d$ copies of $\Q^d$.
Then 
\[\F_1:=\F_{q_1} \cup  \F_{q_2}\dots  \cup \F_{q_d}.\]  

Observe that $\F_1$ is a family of $n-k+d$ boxes in $\R^d$ where every element in $\F_{q_i}$ intersects every element on $\F_{q_j}$
if $i\not= j$. Then the intersection graph $G_{\F_1}$ is a complete $d$-partite graph which is the Tur\'an graph $\T(n-k+d,d)$, and thus the number of 
intersecting pairs in $\F_1$ is $t(n-k+d,d)$.    
Thus 
\[\F:=\F_1\cup\F_0\]
is a family of $n$ boxes where no $k+1$ of them have a point in common.  Furthermore, every element in $\F_0$ intersects 
every element in $\F_1$, so the intersection graph 
of $\F$, $G_{\F}$  has $t(n-k+d,d)+T(n,k-d+1,1)$ edges.
\end{proof}

We are ready now for our main theorem.

\begin{theorem}[Main Theorem] \label{MainTheorem} For every $n\geq k>d \geq 1$,
\[T(n,k,d)=t(n-k+d,d)+T(n,k-d+1,1),\]
and for $n\geq k$, $d \geq 1$ and $k\leq d$,
\[T(n,k,d)=t(n,k).\]
\end{theorem}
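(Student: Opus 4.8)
The plan is to split the statement into its two stated ranges. For $n \ge k > d \ge 1$ there is nothing new to do: Propositions~\ref{menorque} and~\ref{mayorque} give $T(n,k,d) \le \Psi(n,k,d)$ and $\Psi(n,k,d) \le T(n,k,d)$ respectively, so $T(n,k,d) = \Psi(n,k,d)$, and unwinding the definition of $\Psi$ yields $T(n,k,d) = t(n-k+d,d) + T(n,k-d+1,1)$. The genuine content is therefore the complementary range $k \le d$, where I claim $T(n,k,d) = t(n,k)$. The two formulas agree formally at $k=d$, since $\Psi(n,d,d) = t(n,d) + T(n,1,1) = t(n,d)$ because $T(n,1,1)=0$; but the construction in Proposition~\ref{mayorque} uses $k-d$ copies of $\Q^d$ and so degenerates once $k \le d$, which is why this range needs its own argument.

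The key geometric input I would use for $k \le d$ is that axis-parallel boxes have Helly number $2$: a finite family of boxes has a common point if and only if its members pairwise intersect. This is proved coordinatewise, by writing each box as a product of $d$ intervals and applying the one-dimensional Helly theorem in each coordinate separately. The consequence is a clean dictionary: for a family $\F$ of boxes, the condition that no $k+1$ members of $\F$ share a point is equivalent to the condition that the intersection graph $G_{\F}$ contains no clique on $k+1$ vertices, that is, $G_{\F}$ is $K_{k+1}$-free.

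With this dictionary in hand, the upper bound $T(n,k,d) \le t(n,k)$ is exactly Tur\'an's theorem, since a $K_{k+1}$-free graph on $n$ vertices has at most $t(n,k)$ edges. For the matching lower bound I would reuse the slicing construction of Proposition~\ref{mayorque}, but now employ only $k$ of the $d$ coordinate directions, which is precisely where $k \le d$ is used. Inside $\Q^d$, for each $i \in \{1,\dots,k\}$ take $q_i$ parallel slices $\Q_{i,t}^d$ with $q_1 + \dots + q_k = n$ and the $q_i$ as equal as possible; slices perpendicular to the same axis are disjoint while slices perpendicular to distinct axes always meet, so the intersection graph is precisely the complete $k$-partite Tur\'an graph $\T(n,k)$, which is $K_{k+1}$-free and hence admits no common point among any $k+1$ of its boxes. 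This family realizes $t(n,k)$ intersecting pairs, giving $T(n,k,d) \ge t(n,k)$ and completing the equality.

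The step I expect to be the real crux is the Helly-number-$2$ translation, since it is the only place where genuine box geometry enters; once ``no common point of $k+1$ boxes'' is identified with $K_{k+1}$-freeness, both the upper bound (Tur\'an) and the lower bound (the explicit $k$-partite slicing) are routine. It is worth noting that the resulting upper bound $T(n,k,d)\le t(n,k)$ is in fact valid for every $d$, not only for $k\le d$; what is special about $k\le d$ is that there are enough coordinate directions to realize $\T(n,k)$ geometrically, so the Tur\'an bound is actually attained, whereas for $k>d$ Propositions~\ref{menorque} and~\ref{mayorque} show the true maximum $\Psi(n,k,d)$ is in general strictly smaller.
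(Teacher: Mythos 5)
Your proposal is correct and takes essentially the same route as the paper: the range $n\ge k>d$ is handled exactly as you do, by combining Propositions \ref{menorque} and \ref{mayorque}, and for $k\le d$ the paper's one-sentence argument is precisely your combination of Tur\'an's theorem with the Helly property of boxes (left implicit in the paper) for the upper bound, together with a realization of the Tur\'an graph $\T(n,k)$ as an intersection graph of boxes via coordinate slices for the lower bound. Your write-up simply makes explicit the Helly-number-$2$ dictionary and the $k$-direction slicing construction that the paper treats as obvious.
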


\begin{proof}  The first part follows immediately from Propositions \ref{menorque} and \ref{mayorque}.  The second part follows from the fact that $k<d$, and that the Tur\'an graph $\T(n,k)$ is the intersection graph of a family of boxes in $\R^d$. 
\end{proof}

\section{Semi-algebraic Graphs}
\label{sec:semi-algebraic}

\begin{defn}A graph $G$ is \emph{semi-algebraic} if its vertices are represented by a set of points in $P\subset \R^d$ and its edges are defined as pairs of points $(p,q)\in P\times P$ that satisfy a Boolean combination of a fixed number of polynomial equations and inequalities in $2d$-coordinates. For example, the intersection graph of a finite family of boxes in $\R^d$ is semi-algebraic.

An \emph{equipartition} of a finite set is a partition of the set into subsets whose sizes differ by at most one.
\end{defn}

\begin{theorem} [Fox--Gromov--Lafforgue--Naor--Pach \cite{FGL}] 
\label{thm:Fox} Given $\epsilon>0$, there is $K(\epsilon)$ such that if $k\geq K(\epsilon)$, the following statement is true.
 For any $n$-vertex semi-algebraic graph $G$,  there is an equipartiton of the set of vertices $V(G)$  into $k$  classes such that, with the exception of  at most a fraction $\epsilon$  of all pairs of classes, any two classes are either completely connected in $G$ or no edge of $G$ runs between them. 
\end{theorem}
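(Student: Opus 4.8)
The plan is to reduce this strong regularity statement to a partitioning result for set systems of bounded Vapnik--Chervonenkis dimension, since the essential feature of a semi-algebraic graph $G$ of bounded complexity is that its edge relation is cut out by a fixed number $s$ of polynomial (in)equalities of bounded degree $D$ in the $2d$ coordinates. First I would record, via the Milnor--Thom and Warren bounds on the number of sign patterns realized by a bounded family of bounded-degree polynomials, that the neighborhood set system $\{N(p) : p \in V(G)\}$ has VC dimension bounded by a constant depending only on $s$, $D$ and $d$, and not on $n$. This is the structural input that will both force the partition to be homogeneous on almost all pairs and, crucially, make the number of parts polynomial rather than tower-type in $1/\epsilon$.

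Next I would construct the equipartition by an iterated cutting argument. For a fixed vertex $p$, the set $\{y \in \R^d : (p,y) \text{ is an edge}\}$ is semi-algebraic, bounded by the zero sets of $f_1(p,\cdot),\dots,f_s(p,\cdot)$; call these the surfaces associated with $p$. By Matou\v{s}ek's partition theorem for range spaces of bounded VC dimension (equivalently, the cutting lemma for arrangements of bounded-complexity semi-algebraic surfaces), the vertex set can be split into $k$ parts of sizes differing by at most one so that each part is met, or \emph{crossed}, by only a vanishing fraction of the surfaces as $k$ grows. The decisive observation is then elementary: if no surface associated with any vertex of $V_i$ crosses $V_j$, then the bipartite graph between $V_i$ and $V_j$ is homogeneous, because the membership of each $y \in V_j$ in each neighborhood $N(p)$, $p \in V_i$, is constant across $V_j$; hence such a pair of classes is either completely connected or completely disconnected.

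The proof is completed by counting the inhomogeneous pairs: a pair $(V_i,V_j)$ can fail to be homogeneous only when some surface associated with a vertex of one class crosses the other class, and the low-crossing guarantee of the partition bounds the total number of such incidences by a vanishing fraction of $k^2$, so that choosing $k$ past a threshold $K(\epsilon)$ growing only polynomially in $1/\epsilon$ makes the exceptional fraction at most $\epsilon$. I expect the main obstacle to be the product structure of the problem: because an edge depends symmetrically on two points, the surfaces for the vertices of one class live in the ambient space of the other class, so the cutting must be performed and balanced simultaneously in both coordinate blocks while their interaction is controlled without letting the number of parts blow up. Reconciling the equipartition requirement with the low-crossing requirement, and tracking the exact polynomial dependence of $K(\epsilon)$, is the technical heart of the Fox--Gromov--Lafforgue--Naor--Pach argument and the source of its improvement over Szemer\'edi regularity.
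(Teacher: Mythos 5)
You should first be aware that the paper does not prove this theorem at all: it is quoted verbatim from Fox--Gromov--Lafforgue--Naor--Pach \cite{FGL} and used as a black box, its only role being to feed the proof of Theorem \ref{thm:semi-algebraic}. So there is no internal proof to compare against; your proposal has to be judged against the argument in \cite{FGL} and its later polynomial-bound refinements. On that score your skeleton is the right one: bounded VC dimension of the neighborhood set system via sign-pattern (Milnor--Thom/Warren) bounds, a low-crossing partition via cuttings or Matou\v{s}ek's partition theorem, homogeneity of uncrossed pairs, and a final incidence count is exactly how this regularity lemma is proved in the literature, and it is what distinguishes the polynomial $K(\epsilon)$ from the tower-type bounds of Szemer\'edi regularity.

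However, as written there are two genuine gaps. First, your ``decisive observation'' is false as stated: if no surface associated with a vertex of $V_i$ crosses $V_j$, you may only conclude that each individual $p \in V_i$ is either complete or anticomplete to $V_j$; different vertices of $V_i$ can still behave differently (half of $V_i$ joined to all of $V_j$, half joined to none), so the pair need not be homogeneous. Homogeneity requires the symmetric condition---no surface of a vertex of either class crosses the other class---which your final counting paragraph implicitly uses; the repair costs only a factor of $2$ in the count, but the observation as stated does not prove what you claim. Second, the step you yourself flag as ``the technical heart,'' reconciling the equipartition requirement with the low-crossing guarantee (refining the cells of the cutting into classes of size $\lfloor n/k\rfloor$ or $\lceil n/k\rceil$ without destroying the bound on crossed pairs), is not carried out at all, so what you have is a plan rather than a proof. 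Moreover, your worry about cutting ``simultaneously in both coordinate blocks'' is misplaced: the graph has a single vertex set of points in $\R^d$, and for each vertex $p$ the surfaces cutting out $N(p)$ live in that same $\R^d$, so one partition of one point set suffices, with crossings counted in both directions; the genuine difficulties lie in the equipartition refinement and the counting, not in any product structure.
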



As a corollary we obtain a ``fractional Erd\H{o}s--Stone theorem" (see \cite{D}) for the family of semi-algebraic graphs. That is;


\begin{theorem} \label{thm:semi-algebraic}
Given $\epsilon >0$, there is $\beta(\epsilon)>0$ such that if  $G$ is a semi-algebraic graph with $n$ vertices and more than $(1-\frac{1}{d}+\epsilon)\frac{n^2}{2}$ edges,  $G$ contains a complete $(d+1)$-partite subgraph, with each class being almost the same size  $\beta(\epsilon) n$ (a Tur\'an graph).
\end{theorem}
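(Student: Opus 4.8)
The plan is to derive Theorem~\ref{thm:semi-algebraic} (the fractional Erd\H{o}s--Stone statement) directly from the regularity-type partition supplied by Theorem~\ref{thm:Fox}. The guiding idea is that once the vertex set is equipartitioned into $k$ classes so that almost every pair of classes is either complete or empty, I can build an auxiliary ``cluster graph'' $H$ on $k$ vertices, one per class, placing an edge between two clusters exactly when the corresponding pair of classes is completely connected in $G$. A complete $(d+1)$-clique in $H$ then lifts to a complete $(d+1)$-partite subgraph of $G$ with each part equal to one full class, i.e.\ of size roughly $n/k$; so it suffices to force a $K_{d+1}$ in $H$. The role of $\beta(\epsilon)$ will be played by (a constant times) $1/K(\epsilon)$, where $K(\epsilon)$ is the threshold from Theorem~\ref{thm:Fox}.

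First I would fix $\epsilon>0$, apply Theorem~\ref{thm:Fox} with parameter $\epsilon' := \epsilon/2$ (or some comparable value) to obtain $K(\epsilon')$, and then choose $k \geq K(\epsilon')$ large enough that the $\T(k,d)$ density term stays below the edge count; set $\beta(\epsilon)$ accordingly as about $1/k$. Applying the theorem yields an equipartition into $k$ classes with at most $\epsilon' \binom{k}{2}$ ``bad'' (neither complete nor empty) pairs of classes. Next I would bound the total number of edges of $G$ from above in terms of $H$: edges live either inside a class (at most $\binom{n/k}{2}$ per class, hence $O(n^2/k)$ total and negligible for large $k$), within bad pairs (at most $\epsilon'\binom{k}{2}$ pairs, each contributing at most $(n/k)^2$ edges, hence at most about $\epsilon' n^2/2$), or within complete pairs corresponding to edges of $H$ (each contributing at most $(n/k)^2$ edges). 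Combining these with the hypothesis $|E(G)| > (1 - \tfrac1d + \epsilon)\tfrac{n^2}{2}$ forces the number of edges of $H$ to exceed roughly $(1-\tfrac1d)\tfrac{k^2}{2}$; comparing this with the Tur\'an threshold $t(k,d) \le (1-\tfrac1d)\tfrac{k^2}{2}$, and absorbing the lower-order slack into the $\epsilon$-margin, shows $|E(H)| > t(k,d)$.

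With $|E(H)| > t(k,d)$ in hand, Tur\'an's theorem guarantees that $H$ contains a clique $K_{d+1}$. Lifting this clique back to $G$ produces $d+1$ classes that are pairwise completely connected, i.e.\ a complete $(d+1)$-partite subgraph, each part of size $\lfloor n/k \rfloor$ or $\lceil n/k \rceil$, which is $\geq \beta(\epsilon)n$ and an equipartition, exactly as claimed. The main obstacle, and the step demanding the most care, is the bookkeeping of the $\epsilon$-budget: I must split the original surplus $\epsilon \tfrac{n^2}{2}$ so that it simultaneously dominates the intra-class edges (controlled by taking $k$ large), the edges across the bad pairs (controlled by the $\epsilon'$ fraction in Theorem~\ref{thm:Fox}), and the gap between $(1-\tfrac1d)\tfrac{k^2}{2}$ and $t(k,d)$, while still leaving strictly more than $t(k,d)$ edges in $H$. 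Making these three allowances fit inside the single parameter $\epsilon$ — and verifying that the resulting cluster-graph edge count genuinely exceeds the Tur\'an number rather than merely matching it — is where the inequalities must be pinned down exactly; everything else is the standard regularity-to-Erd\H{o}s--Stone lifting argument.
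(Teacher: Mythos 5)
Your proposal is correct and follows essentially the same route as the paper: your cluster graph $H$ is exactly the paper's ``super-graph'' $G'$, and in both cases the surplus edge density forces $|E(G')| > t(k,d)$ so that Tur\'an's theorem yields a $K_{d+1}$ that lifts to a complete $(d+1)$-partite subgraph with parts of size about $n/k$. The only difference is that you spell out the $\epsilon$-bookkeeping (intra-class edges, bad pairs, and the Tur\'an gap) that the paper compresses into the phrase ``if $k$ is big enough.''
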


\begin{proof}  
Construct a ``super-graph" $G^\prime$ whose vertices are the $k$ classes of the partition given by Theorem \ref{thm:Fox}, two classes being joined by an edge of $G^{\prime}$ if all possible edges between them belong to $G$. Note that by our assumption, if $k$ is  big enough,  we can apply Tur\'an's theorem to the graph $G^\prime$  to conclude that it contains a complete graph of $d+1$ vertices. This means that there is a complete $(d+1)$-partite subgraph of $G$, with each class being almost the same size  $\frac{1}{k} n$. 
\end{proof}

As an immediate consequence of Theorem \ref{thm:semi-algebraic} we have the ``fractional Helly theorem" for boxes.

\begin{corollary} Let $\F$ be a family of $n$ axis-parallel boxes in $\R^d$ 
and $\al\in (1-1/d,1]$ a real number. There exists a real number $\be(\al)>0$ such that 
if there are $\al \frac{n^2}{2}$ intersecting
pairs in $\F$, then $\F$ contains an intersecting
subfamily of size $\be n$.
\end{corollary}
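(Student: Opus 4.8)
The plan is to apply Theorem~\ref{thm:semi-algebraic} directly to the intersection graph $G_{\F}$, which is semi-algebraic. Since $\al>1-\frac1d$, I would first fix $\eps$ with $0<\eps<\al-(1-\frac1d)$, for instance $\eps=\tfrac12(\al-1+\frac1d)$. Then the hypothesis of $\al\frac{n^2}{2}$ intersecting pairs makes $G_{\F}$ a semi-algebraic graph with more than $(1-\frac1d+\eps)\frac{n^2}{2}$ edges, so Theorem~\ref{thm:semi-algebraic} yields a constant $\be(\eps)>0$ and a complete $(d+1)$-partite subgraph of $G_{\F}$ whose classes $V_0,\dots,V_d$ each have size roughly $\be(\eps)n$. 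It then remains to extract from these classes a single intersecting subfamily of $\F$ of comparable size.

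The key step is to prove that at least one of the $d+1$ classes is itself an intersecting family of boxes. Here I would exploit the one-dimensional Helly property of intervals. For a coordinate $j$ and a box $B$, let $I_j(B)$ denote the projection of $B$ onto the $j$-th axis; a subfamily of boxes has a common point iff, in every coordinate $j$, the projected intervals have a common point, which for intervals holds iff they are pairwise intersecting. Call $j$ a \emph{bad coordinate} for a class $V_i$ if the intervals $\{I_j(B):B\in V_i\}$ have empty intersection, equivalently if some two boxes of $V_i$ are disjoint in the $j$-th coordinate; a class is intersecting exactly when it has no bad coordinate. The crucial observation is that two distinct classes cannot share a bad coordinate: if $V_i$ contains boxes $B,B'$ whose $j$-th projections are disjoint with $I_j(B)$ to the left of $I_j(B')$, and $V_{i'}$ contains boxes $C,C'$ that are disjoint in coordinate $j$, then---since the classes are completely connected---each of $I_j(C)$ and $I_j(C')$ meets both $I_j(B)$ and $I_j(B')$, hence, being an interval, contains the entire gap between them; this forces $I_j(C)\cap I_j(C')\neq\emptyset$, contradicting the disjointness of $C$ and $C'$ in coordinate $j$. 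Consequently the bad coordinates of the non-intersecting classes are pairwise distinct, so at most $d$ of the $d+1$ classes can fail to be intersecting, and at least one class has a common point.

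That intersecting class is a subfamily of $\F$ of size roughly $\be(\eps)n$, so taking $\be(\al)$ to be a constant slightly smaller than $\be(\eps)$ (to absorb the rounding in the equipartition) produces an intersecting subfamily of size at least $\be(\al)n$, completing the argument. I expect the middle paragraph to be the main obstacle: the real content is converting the purely graph-theoretic output of Theorem~\ref{thm:semi-algebraic}, a large complete $(d+1)$-partite subgraph, into the genuinely geometric statement that one class has a common point. Everything turns on matching the $d+1$ classes against the $d$ coordinates through the interval-gap argument, and it is precisely the appearance of $d+1$ classes---forced by the Tur\'an threshold $1-\frac1d$---that makes this pigeonhole over the $d$ coordinates go through.
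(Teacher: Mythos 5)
Your proposal is correct and takes essentially the same route as the paper: apply Theorem~\ref{thm:semi-algebraic} to the semi-algebraic intersection graph $G_{\F}$ and then show that one of the $d+1$ classes of the resulting complete $(d+1)$-partite subgraph must be an intersecting subfamily. The only difference is in how that last geometric step is handled: the paper disposes of it by citing from \cite{FP} the well-known fact that intersection graphs of boxes in $\R^d$ contain no induced $K_{2,d+1}$ (so some class has no non-adjacent pair, hence is a clique, hence has a common point by the Helly property of boxes), whereas your bad-coordinate pigeonhole with the interval-gap argument is a self-contained proof of exactly that fact, merged with the Helly step; your version spells out the geometric content that the paper outsources to the reference.
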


\begin{proof}  Let $K_{2,d}$ be the complete $d$-partite graph with two vertices in each color class. The corollary follows immediately from the following well known property \cite{FP}: if $G$ is the intersection graph of a family of boxes in $\R^d$, then $G$ does not contain an  induced $K_{2,d+1} $. 
\end{proof} 

\smallskip
We thank Janos Pach for drawing this new approach to our attention.

\section{Acknowledgements}
The second and third author wish to acknowledge support by CONACyT under
project 166306, and the support of PAPIIT under project IN112614 and IN101912 respectively.
The first author was partially supported by MTM 2012-30719.

\end{document}